\documentclass[11pt]{article}
\usepackage{amsmath,amssymb,amsbsy,amsfonts,amsthm,latexsym,
            amsopn,amstext,amsxtra,euscript,amscd,amsthm}

\newtheorem{lem}{Lemma}
\newtheorem{lemma}[lem]{Lemma}

\newtheorem{thm}{Theorem}
\newtheorem{theorem}[thm]{Theorem}


\def\\{\cr}
\def\({\left(}
\def\){\right)}
\def\[{\left[}
\def\]{\right]}
\def\<{\langle}
\def\>{\rangle}

\begin{document}

\title{On the Euler function of linearly recurrent sequences}

\date{\today}

\pagenumbering{arabic}

\author{Florian~Luca\footnote{Mathematics Division, Stellenbosch University, Stellenbosch, South Africa and Centro de Ciencias Matem\'aticas UNAM, Morelia, Mexico  email: florian.luca@wits.ac.za}\qquad Makoko Campbell Manape\footnote{School of Maths, Wits University, Johannesburg, South Africa}}

\maketitle

\begin{abstract}
In this paper, we show that if $(U_n)_{n\ge 1}$ is any nondegenerate linearly recurrent sequence of integers whose general term is up to sign not a polynomial in $n$, then the inequality $\phi(|U_n|)\ge |U_{\phi(n)}|$ holds on a set of positive integers $n$ of density $1$, where $\phi$ is the Euler function. In fact, we show that the set of $n\le x$ for which the above inequality fails has counting function $O_U(x/\log x)$.
\end{abstract}

{\small {\bf AMS Subject Classification:} 11B39.}

{\small {\bf Keywords:} Linear recurrence sequences, Euler function}

\section{Introduction}

Let $(U_n)_{n\ge 1}$ be a linearly recurrent sequence of integers. Such a sequence satisfies a recurrence of the form
\begin{equation}
\label{eq:recurrence}
U_{n+k}=a_1U_{n+k-1}+\cdots+a_k U_n\qquad {\text{\rm for~all}}\qquad n\ge 1
\end{equation}
with integers $a_1,\ldots,a_k$, where $U_1,\ldots,U_{k}$ are integers. Assuming $k$ is minimal, $U_n$ can be represented as
\begin{equation}
\label{eq:Binet}
U_n=\sum_{i=1}^s P_i(n)\alpha_i^n,
\end{equation}
where 
\begin{equation}
\label{eq:Charpoly}
\Psi(X):=X^k-a_1X^{k-1}-\cdots-a_k=\prod_{i=1}^s (X-\alpha_i)^{\sigma_i}
\end{equation}
is the characteristic polynomial of $(U_n)_{n\ge 1}$, $\alpha_1,\ldots,\alpha_s$ are the distinct roots of $\Psi(X)$ with multiplicities $\sigma_1,\ldots,\sigma_s$, respectively, and $P_i(X)$ 
is a polynomial of degree $\sigma_i-1$ with coefficients in ${\mathbb Q}(\alpha_i)$. The sequence is nondegenerate if $\alpha_i/\alpha_j$ is not a root of $1$ for any $i\ne j$ in $\{1,\ldots,s\}$.
A classical example is the Fibonacci sequence $(F_n)_{n\ge 1}$ which has $k=2$, $\Psi(X)=X^2-X-1$ and initial terms $F_1=F_2=1$. Let $\phi(m)$ and $\sigma(m)$ 
be the Euler function and sum of divisors function of the positive integer $m$. In \cite{Luc}, the first author proved that the inequalities 
$$
\phi(F_n)\ge F_{\phi(n)}\qquad {\text{\rm and}}\qquad \sigma(F_n)\le F_{\sigma(n)}
$$
hold for all positive integers $n$. It was also remarked that if instead of considering $(F_n)_{n\ge 1}$ one considers a Lucas sequence with complex conjugated roots; i.e., a 
nondegenerate binary recurrent sequence $(U_n)_{n\ge 0}$ with $U_0=0,~U_1=1$ and $\Psi(X)$ a quadratic polynomial with complex conjugated roots, then the inequality 
$$
\phi(|U_n|)\ge |U_{\phi(n)}|
$$
fails infinitely often. In fact, it fails for a positive proportion of prime numbers $n$. Such questions were recently revisited by other authors (see \cite{Pra} and \cite{Pan}, for example). 

In this paper, we prove the following theorem. Recall that if $f(x)$ and $g(x)$ are functions defined on ${\mathbb R}_{+}$ with values in ${\mathbb R}_{+}$ we write $f(x)=O(g(x))$ and $f(x)=o(g(x))$ if the inequality $f(x)<Kg(x)$ holds with some constant $K>0$ and all $x>x_0$, and $\lim_{x\to\infty} f(x)/g(x)=0$, respectively. Further, the notations $f(x)\ll g(x)$ and $g(x)\gg f(x)$ are equivalent to $f(x)=O(g(x))$. When the implied constant $K$ depends on some other parameters like $U,~\varepsilon$, we indicate this by writing $f(x)=O_{U,\varepsilon}(g(x))$ or $f(x)\ll_{U,\varepsilon} g(x)$. 

\begin{theorem}
\label{thm:Main}
Let $(U_n)_{n\ge 1}$ be a nondegenerate linearly recurrent sequence of integers such that $|U_n|$ is not a polynomial in $n$ for all large $n$ and let $x$ be a large real number. Then the inequality 
\begin{equation}
\label{eq:1}
\phi(|U_n|)\ge |U_{\phi(n)}|
\end{equation}
fails on a set of positive integers $n\le x$ of cardinality $O_U(x/\log x)$. A similar statement holds for the positive integers $n\le x$ for which the inequality 
$$
\sigma(|U_n|)\le |U_{\sigma(n)}|
$$
fails. 
\end{theorem}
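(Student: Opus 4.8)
The plan is to compare the two sides of \eqref{eq:1} after taking logarithms and to reduce everything to the single estimate that $|U_n|$ is, for almost all $n$, nearly as large as its trivial upper bound permits. Write $U_n=\sum_{i=1}^s P_i(n)\alpha_i^n$ as in \eqref{eq:Binet} and set $\rho=\max_i|\alpha_i|$. First I would record that the hypothesis that $|U_n|$ is not eventually a polynomial forces $\rho>1$: if $\rho=1$ then, the $\alpha_i$ being algebraic integers the product of whose moduli is a nonzero integer, every root lies on the unit circle and is therefore a root of unity by Kronecker's theorem; nondegeneracy then leaves a single root equal to $1$, so $U_n=P_1(n)$ is a polynomial, contrary to hypothesis. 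From \eqref{eq:Binet} one has the unconditional upper bound $|U_m|\le C_U\,\rho^m m^t$ for all $m\ge 1$, where $t=\max_i\deg P_i$ and $C_U$ depends only on $U$.

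The heart of the matter is the matching lower bound. \textbf{Claim.} For a suitable constant $C=C(U)$, the inequality $|U_n|\ge \rho^n n^{-C}$ holds for all $n\le x$ outside an exceptional set $\cB$ of cardinality $O_U(x^{1-\delta})$, for some $\delta=\delta(U)>0$. To prove the Claim I would split $U_n=V_n+W_n$, where $V_n$ collects the terms with $|\alpha_i|=\rho$ and $W_n$ the rest; since the roots in $W_n$ have modulus at most some $\rho'<\rho$, one has $|W_n|\le C_U(\rho')^n n^t$, which is negligible compared with $\rho^n n^{-C}$ once $n$ is large. Writing the dominant roots as $\rho e^{i\theta_j}$ and factoring out $n^t\rho^n$, the size of $V_n$ is governed by the values $g(n)$ of a fixed nonzero generalized trigonometric polynomial $g(u)=\sum_j Q_je^{iu\theta_j}$, the $Q_j$ being the leading coefficients of the corresponding $P_i$. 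If there is a unique dominant root (necessarily real) then $g$ is constant and the Claim is immediate with empty $\cB$; in general I would bound $\#\{n\le x:|g(n)|<n^{-A}\}$ dyadically in $n\sim N$ by combining two ingredients: that the sublevel set $\{|g|<\eta\}$ has measure $\ll\eta^{c}$ on the relevant subtorus (a real-analytic sublevel estimate, using $g\not\equiv 0$), and that the points $(n\theta_j)_{n\le N}$ are equidistributed on that subtorus with power-saving discrepancy. The discrepancy bound is where Baker's theorem on linear forms in logarithms enters: the $\theta_j$ are arguments of ratios of the algebraic numbers $\alpha_i$, so the relevant linear forms in their logarithms cannot be too small, which via the Erd\H{o}s--Tur\'an inequality yields discrepancy $\ll N^{-\eta_0}$. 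Summing the resulting bound $N^{1-\min(Ac,\eta_0)}$ over dyadic $N\le x$ gives the Claim.

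With the Claim in hand the $\phi$-statement follows quickly. Taking logarithms and using $\phi(m)\gg m/\log\log m$, hence $\log\phi(|U_n|)\ge \log|U_n|-O(\log\log n)$, together with the Claim on the left and the unconditional upper bound on the right, the desired inequality \eqref{eq:1} reduces, for $n\notin\cB$, to
$$
(n-\phi(n))\log\rho\ \ge\ C'\log n
$$
for a constant $C'=C'(U)$. Since $\phi(n)=n\prod_{p\mid n}(1-1/p)\le n(1-1/P^-(n))$, where $P^-(n)$ denotes the least prime factor of $n$, one has $n-\phi(n)\ge n/P^-(n)$; for every composite $n$ one has $P^-(n)\le\sqrt n$ and hence $n-\phi(n)\ge\sqrt n$, which exceeds $C'\log n$ once $n$ is large. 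Thus \eqref{eq:1} can fail only for $n\in\cB$, for boundedly many $n$, or for $n$ prime, and the number of such $n\le x$ is $O_U(x^{1-\delta})+\pi(x)=O_U(x/\log x)$.

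The $\sigma$-statement is entirely parallel, using $\sigma(m)\ll m\log\log m$ so that $\log\sigma(|U_n|)\le\log|U_n|+O(\log\log n)$ on the left and the lower bound of the Claim applied to $|U_{\sigma(n)}|$ on the right; the inequality then holds whenever $\sigma(n)-n\ge C'\log n$, which again fails only for primes (where $\sigma(n)=n+1$) and finitely many other $n$. The one extra point is that here the Claim is invoked at the argument $\sigma(n)$, so one must check that $\#\{n\le x:\sigma(n)\in\cB\}=O_U(x/\log x)$; this follows from $\sigma(n)\ll n\log\log n$ together with the sparseness of $\cB$ and the standard bound $\#\{n:\sigma(n)=m\}=m^{o(1)}$, which yields $\#\{n\le x:\sigma(n)\in\cB\}\le x^{1-\delta/2}$. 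I expect the sharp lower bound of the Claim---producing a power-saving exceptional set while losing only the polynomial factor $n^{-C}$ in the size of $U_n$---to be the main obstacle, since a crude estimate of the shape $|U_n|>\rho^{n(1-\eps)}$ is useless here: for typical $n$ the gap $n-\phi(n)$ is only of order $\sqrt n$, far smaller than $\eps n$.
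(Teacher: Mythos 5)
Your route is genuinely different from the paper's, and the difference is concentrated in your Claim. The paper never proves (and never needs) anything that strong: its key input, Lemma~\ref{lem:delta}, proved with Evertse's quantitative subspace theorem rather than Baker plus equidistribution, only shows that $|U_n|\le |\alpha_1|^{n(1-\delta)}$ with $\delta=x^{-c}$ for at most $O_U(\sqrt x)$ values of $n\le x$ --- a loss of a factor $|\alpha_1|^{n\delta}$, enormous compared with your polynomial loss $n^{-C}$. That weaker input suffices because the paper does not try to exploit the gap $n-\phi(n)\ge\sqrt n$ valid for every composite $n$; instead it discards the $O(x/\log x)$ integers with no prime factor below $x^{c_1}$ (Brun sieve, Lemma~\ref{lem:1}) and for the remaining $n$ has $n-\phi(n)\ge n/p(n)\ge x^{1/2-c_1}$, which comfortably beats the polynomial factors \emph{and} the loss $|\alpha_1|^{n x^{-c}}$. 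So your plan trades a soft sieve step for a much harder Diophantine lower bound. As sketched, the Claim has real holes: (i) you run the sublevel-set/discrepancy argument for the top-degree part $g$ only, but when $|g(n\theta)|\asymp 1/n$ the discarded lower-degree terms of the $P_i$ are of the same order and can cancel the main term, so the argument must be carried out for the full perturbed function (this is repairable, e.g.\ by enlarging $\eta$ to $\eta+O(1/n)$, but it is not addressed); (ii) applying a discrepancy bound to $\{|g|<\eta\}$ requires boundary-complexity control uniform in $\eta$, which you assert implicitly; (iii) one must check $g$ does not vanish identically on the orbit closure (it does not, by a Vandermonde argument, but this should be said). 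If all of this were carried out, you would have proved a quantitative lower bound for $|U_n|$ considerably sharper than anything in the paper.

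The $\sigma$ case contains a genuine error: the bound $\#\{n:\sigma(n)=m\}=m^{o(1)}$ is not a standard fact --- it is false. By a classical construction of Erd\H{o}s there are infinitely many $m$ with $\#\phi^{-1}(m)>m^{\delta_0}$ for a fixed $\delta_0>0$ (current results give exponent $>0.7$), and the analogue holds for $\sigma$; since you have no control over which integers land in the exceptional set $\cB$, you cannot rule out that such popular values do, and then $\#\{n\le x:\sigma(n)\in\cB\}$ need not be $O(x^{1-\delta/2})$. The paper patches precisely this point: it first discards the $O(x/(\log x)^2)$ integers with $\Omega(n)>10\log\log x$ or $\tau(\sigma(n))>\exp(\sqrt{\log x})$ (Lemmas~\ref{lem:2} and~\ref{lem:3}), and then bounds the number of preimages of a given value $m=\sigma(n)$ by $\exp(20(\log\log x)\sqrt{\log x})=x^{o(1)}$ using the factorization $\sigma(n)=\prod_i (p_i^{a_i+1}-1)/(p_i-1)$. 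You need this (or an equivalent) device; with it, the rest of your $\sigma$ argument goes through.
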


The theorem does not hold for sequences for which $U_n$ is either $P(n)$ or $(-1)^n P(n)$, with some polynomial $P(X)\in {\mathbb Z}[X]$, whose characteristic polynomial $\Psi(X)$ is one of $(X-1)^{k}$ or $(X+1)^{k}$, where $k-1$ is the degree of $P(X)$. 
For example, with $k=3$ and $P(X)=X^2+1$, we have that if $n$ is odd then $U_n=n^2+1$ is even therefore 
$$
\phi(U_n)\le \frac{n^2+1}{2}.
$$
On the other hand, for a positive proportion of $n$, we have $\phi(n)>n/{\sqrt{2}}$ and all such $n$ are odd. Indeed, if $n$ is even, then $\phi(n)/n\le 1/2$, so we cannot have $\phi(n)/n>1/{\sqrt{2}}$ for such $n$. 
To justify why there are a positive proportion of such $n$, recall that Schoenberg \cite{Scho} proved the existence of a continuous monotone function $f:[0,1]\rightarrow [0,1]$ with $f(0)=0$ and $f(1)=1$ such that 
$$
\lim_{x\to\infty} \frac{1}{x}\#\left\{n\le x:\frac{\phi(n)}{n}\le \alpha\right\}=f(\alpha)\qquad {\text{\rm for}}\qquad \alpha\in [0,1].
$$
In particular, the density of the set of $n$ such that $\phi(n)/n>1/{\sqrt{2}}$ equals $1-f(1/{\sqrt{2}})>0$. For such $n$, 
$$
U_{\phi(n)}=\phi(n)^2+1>\frac{n^2}{2}+1>\phi(U_n).
$$

As we said above, the bound $O_U(x/\log x)$ from the statement of Theorem \ref{thm:Main} is close to the truth in some cases like when $(U_n)_{n\ge 0}$ is a Lucas sequence with complex conjugated roots. Even more, it is easy to construct binary recurrent sequences $(U_n)_{n\ge 1}$ with real roots for which inequality \eqref{eq:1} fails for a number of positive integers $n\le x$ which is $\gg_U x/(\log x)$. For example, let 
$q_1<\cdots<q_k$ be odd primes such that 
$$
\sum_{i=1}^k \frac{1}{q_i}>1.
$$
Let $a>2$ be a positive integer such that $a\equiv 2\pmod {q_i}$ for $i=1,\ldots,k$. Then $2^p-a$ is a multiple of $q_i$ for all $i=1,\ldots,k$, whenever $p$ is a prime such that $p\equiv 1\pmod {q_i-1}$ for all $i=1,\ldots,k$. For such primes $p$ which are sufficiently large, we have 
\begin{eqnarray*}
\phi(2^p-a) & = & (2^p-a)\prod_{q\mid 2^p-a} \left(1-\frac{1}{q}\right)\le (2^p-a)\prod_{i=1}^k \left(1-\frac{1}{q_i}\right)\\
& < & (2^p-a) \exp\left(-\sum_{i=1}^k \frac{1}{q_i}\right)<\frac{2^p-a}{e}\\
& < & 2^{p-1}-a=2^{\phi(p)}-a.
\end{eqnarray*}
Thus, $\phi(U_n)<U_{\phi(n)}$ for $n=p$ a large prime in the progression 
$$p\equiv 1\pmod {{\text{\rm lcm}}[q_1-1,\ldots,q_k-1]}$$
and $U_n:=2^n-a$ which is the $n$th term of a binary recurrent sequence of characteristic polynomial $\Psi(X)=X^2-3X+2$. In  the above,  the notation ${\text{\rm lcm}}[q_1-1,\ldots,q_k-1]$ stands for the least common multiple of $q_1-1,\ldots,q_k-1$.

\section{Preliminary results}
\subsection{Arithmetic functions}

Here, we collect a few facts from the anatomy of integers which come in handy for our proof of Theorem \ref{thm:Main}. The first result addresses the minimal order of $\phi(n)$ and the maximal order of $\sigma(n)$. It follows from Theorems 323, 328 and 329 in \cite{HW}. 

\begin{lemma}
\label{lem:0}
Let $n\ge 3$. We then have 
$$
\frac{\phi(n)}{n}\gg \frac{1}{\log\log n}\qquad {\text{\rm and}}\qquad \frac{\sigma(n)}{n}\ll \log \log n.
$$
\end{lemma}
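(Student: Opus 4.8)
The plan is to establish the lower bound for $\phi(n)/n$ first and then deduce the upper bound for $\sigma(n)/n$ from it, since both reduce to Mertens' theorem (Theorem 323 in \cite{HW}). Writing $\phi(n)/n=\prod_{p\mid n}(1-1/p)$ as a product over the distinct primes dividing $n$, I note that for a fixed number $r=\omega(n)$ of distinct prime divisors this product is smallest when those primes are the first $r$ primes $p_1<\cdots<p_r$. Indeed, if $q_1<\cdots<q_r$ are the distinct primes dividing $n$ then $q_i\ge p_i$ for each $i$, and since $t\mapsto 1-1/t$ is increasing,
$$\frac{\phi(n)}{n}=\prod_{i=1}^r\left(1-\frac{1}{q_i}\right)\ge \prod_{i=1}^r\left(1-\frac{1}{p_i}\right)=\prod_{p\le p_r}\left(1-\frac1p\right)\gg \frac{1}{\log p_r},$$
the last step being Mertens' theorem.

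To convert this into a bound in terms of $n$ alone, I would use that $n$ is divisible by the product of its distinct prime factors, so that $\log n\ge \vartheta(p_r)$, where $\vartheta(y)=\sum_{p\le y}\log p$ is the Chebyshev function. By Chebyshev's estimate $\vartheta(y)\gg y$, so $p_r\ll \log n$ and hence $\log p_r\le \log\log n+O(1)$. Combining with the previous display gives $\phi(n)/n\gg 1/\log p_r\gg 1/\log\log n$, which is the first assertion once the finitely many small values of $n$ (where $\log\log n$ is not yet positive) are checked by hand.

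For the second assertion I would relate $\sigma$ directly to $\phi$ rather than invoke Mertens again. Bounding each local factor of $\sigma(n)/n=\prod_{p^a\| n}\frac{1-p^{-(a+1)}}{1-p^{-1}}$ by dropping the numerator $1-p^{-(a+1)}<1$ yields
$$\frac{\sigma(n)}{n}<\prod_{p\mid n}\frac{1}{1-1/p}=\frac{n}{\phi(n)}.$$
The first part supplies a constant $c>0$ with $\phi(n)/n\ge c/\log\log n$, hence $n/\phi(n)\le c^{-1}\log\log n$, and the two displays together give $\sigma(n)/n\ll\log\log n$.

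The arguments are entirely classical, so I expect no genuine obstacle; the only point needing care is to make the estimates uniform for all $n\ge 3$ instead of merely asymptotic. This amounts to keeping the implied constants in Mertens' theorem and in Chebyshev's bound explicit and verifying the claimed inequalities directly on the handful of small $n$ where the asymptotic reasoning has not yet taken hold.
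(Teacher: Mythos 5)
Your proof is correct, but it takes a genuinely different route from the paper, which offers no argument at all: the paper simply cites Theorems 323, 328 and 329 of \cite{HW} (the maximal order of $\sigma$, the minimal order of $\phi$, and the inequality $6/\pi^2<\sigma(n)\phi(n)/n^2<1$). Your argument reconstructs weaker but fully sufficient versions of these facts from first principles: the comparison $q_i\ge p_i$ together with Mertens' theorem gives $\phi(n)/n\gg 1/\log p_r$, Chebyshev's bound $\vartheta(y)\gg y$ converts this to $p_r\ll\log n$ and hence $\log p_r\ll\log\log n$, and then the inequality $\sigma(n)/n<n/\phi(n)$ --- which is precisely the upper half of Hardy and Wright's Theorem 329 --- transfers the lower bound for $\phi$ into the upper bound for $\sigma$, avoiding a second appeal to Mertens. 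What the citation buys is the sharp constants $e^{\pm\gamma}$; what your route buys is a self-contained, uniform proof whose only external inputs are Mertens and Chebyshev, and it makes visible the same $\phi$--$\sigma$ bridge that Hardy and Wright themselves use to pass between their two theorems. One small correction: your parenthetical about small $n$ ``where $\log\log n$ is not yet positive'' is vacuous, since $\log\log n>0$ for every $n\ge 3$; the genuine issue is only that the $O(1)$ in $\log p_r\le\log\log n+O(1)$ must be absorbed into a constant multiple of $\log\log n$, which requires $n$ large and then a finite constant adjustment on the remaining range --- exactly the fix you describe in your closing paragraph, so nothing is missing.
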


For a positive integer $n$ put $p(n)$ for the smallest prime factor of $n$ with the convention that $p(1)=1$. 
For $x\ge y\ge 2$ put
$$
\Phi(x,y):=\#\{n\le x: p(n)>y\}.
$$
The following inequality is a consequence of the Brun sieve and appears, for example, on page 397 in \cite{Ten} (see also Exercise on page 11 in \cite{HT}).
\begin{lemma}
\label{lem:1}
We have, uniformly for $x\ge y\ge 2$, 
$$
\Phi(x,y)\ll \frac{x}{\log y}.
$$
\end{lemma}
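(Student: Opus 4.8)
The plan is to read $\Phi(x,y)$ as the sifting function counting the integers in $\{1,2,\ldots,\lfloor x\rfloor\}$ having no prime factor $\le y$, i.e.\ those coprime to the primorial $\prod_{p\le y}p$, and then to treat the ranges $\sqrt{x}<y\le x$ and $2\le y\le\sqrt{x}$ separately. The first range is elementary; the second is where Brun's sieve enters. Throughout one uses that $\Phi(x,y)$ is nonincreasing in $y$, since the condition $p(n)>y$ becomes more restrictive as $y$ grows.

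For $\sqrt{x}<y\le x$ I would bound $\Phi(x,y)\le\Phi(x,\sqrt{x})$ by monotonicity. An integer $n\le x$ with $p(n)>\sqrt{x}$ is either $1$ or has all prime factors exceeding $\sqrt{x}$; since two such factors would already force $n>x$, it must be a prime in $(\sqrt{x},x]$. Hence $\Phi(x,\sqrt{x})\le 1+\pi(x)\ll x/\log x$ by Chebyshev's bound, and because $\log y\ge\tfrac12\log x$ throughout this range we obtain $\Phi(x,y)\ll x/\log x\ll x/\log y$, as required.

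For $2\le y\le\sqrt{x}$ I would apply Brun's sieve, in its fundamental-lemma form, to $A=\{n\le x\}$ sifted by the primes $p\le y$. For squarefree $d$ one has $|A_d|=\lfloor x/d\rfloor=x/d+O(1)$, so the remainder $R_d=-\{x/d\}$ satisfies $|R_d|<1$, and truncation at level $D=\sqrt{x}$ contributes a total error $\sum_{d\le D}|R_d|\ll D=\sqrt{x}$. The fundamental lemma then gives $\Phi(x,y)\ll x\prod_{p\le y}\bigl(1-1/p\bigr)+\sqrt{x}$, the sieve parameter $s=\log D/\log y=\tfrac12\log x/\log y$ staying bounded below by $1$ (because $y\le\sqrt{x}$), which keeps the factor correcting the main term of size $O(1)$ and so suffices for an upper bound of the right order. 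By Mertens' theorem $\prod_{p\le y}(1-1/p)\asymp 1/\log y$, and since $\sqrt{x}\ll x/\log x\le x/\log y$ for large $x$, both terms are $\ll x/\log y$, finishing this range.

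The crux is controlling the sieve error uniformly in $y$. The exact inclusion--exclusion identity $\Phi(x,y)=\sum_{d\mid\prod_{p\le y}p}\mu(d)\lfloor x/d\rfloor$ has $2^{\pi(y)}$ summands, and replacing $\lfloor x/d\rfloor$ by $x/d$ accumulates an error of order $2^{\pi(y)}$, which already swamps the main term for moderate $y$. Brun's method replaces the full identity by its Bonferroni truncation at some even depth $2m$, an upper bound for $\Phi(x,y)$; the discarded main-term tail $\sum_{j>2m}\tfrac1{j!}\bigl(\sum_{p\le y}1/p\bigr)^{j}$ is negligible once $2m$ exceeds a suitable multiple of $\sum_{p\le y}1/p\sim\log\log y$, while the remainder shrinks to $\sum_{\omega(d)\le 2m}1\le\pi(y)^{2m}$. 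The tension is that $\pi(y)^{2m}$ must stay below the main term $\asymp x/\log y$, and near $y=\sqrt{x}$ the naive depth $2m\asymp\log\log y$ makes $\pi(y)^{2m}$ far too large; resolving this is exactly what the sharper, level-based truncation of the fundamental lemma does, keeping the remainder at $O(D)$ with $D=\sqrt{x}$ admissible for every $y\le\sqrt{x}$. Verifying this admissibility uniformly is the heart of the matter.
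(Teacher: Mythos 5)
Your proof is correct and takes the same route the paper itself relies on: the paper gives no proof of this lemma, only the remark that it follows from the Brun sieve (citing Tenenbaum, p.~397, and Hall--Tenenbaum), and your argument is exactly that sieve argument carried out --- the fundamental lemma with level $D=\sqrt{x}$ for $y\le\sqrt{x}$, plus the elementary Chebyshev reduction for $\sqrt{x}<y\le x$. Nothing needs fixing.
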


Let $\Omega(n)$ be the total number of prime factors of $n$ counting multiplicities.

\begin{lemma}
\label{lem:2} 
Let $x\ge 10$. The number of positive integers $n\le x$ such that $\Omega(n)\ge 10\log\log x$ is  $O(x/(\log x)^2)$. 
\end{lemma}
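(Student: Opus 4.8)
The plan is to use Rankin's method (the exponential-moment / Chernoff approach), exploiting that $z^{\Omega(n)}$ is a nonnegative multiplicative function for every fixed $z>1$. First I would fix a constant $z$ with $1<z<2$, say $z=3/2$, and note that since $z>1$ the event $\Omega(n)\ge 10\log\log x$ is the same as $z^{\Omega(n)}\ge z^{10\log\log x}$, so Markov's inequality gives
$$
\#\{n\le x:\Omega(n)\ge 10\log\log x\}\le z^{-10\log\log x}\sum_{n\le x} z^{\Omega(n)}.
$$
Because $z^{-10\log\log x}=(\log x)^{-10\log z}$, the whole problem reduces to an upper bound for the mean value $S(x):=\sum_{n\le x}z^{\Omega(n)}$.

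The key step is to establish $S(x)\ll_z x(\log x)^{z-1}$. For this I would first treat the logarithmically weighted sum via an Euler product: since every $n\le x$ has all its prime factors at most $x$, and all terms are nonnegative,
$$
\sum_{n\le x}\frac{z^{\Omega(n)}}{n}\le \prod_{p\le x}\left(1-\frac{z}{p}\right)^{-1}.
$$
Here $z<2$ guarantees $z/p<1$ for every prime $p$, so each factor is a positive constant and the product is finite. Taking logarithms and using $-\log(1-z/p)=z/p+O(z^2/p^2)$ together with Mertens' estimate $\sum_{p\le x}1/p=\log\log x+O(1)$ yields $\sum_{n\le x}z^{\Omega(n)}/n\ll_z(\log x)^{z}$. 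To pass from the weighted sum back to $S(x)$ one loses exactly one factor of $\log x$: by the classical comparison inequality for nonnegative multiplicative functions (Hall's inequality, as in \cite{HT}; see also the mean-value bounds in \cite{Ten}), which applies here since $z^{\Omega(p)}=z$ is bounded and $\sum_{p,\,k\ge 2}z^k/p^k<\infty$, one has $S(x)\ll_z \frac{x}{\log x}\sum_{n\le x}z^{\Omega(n)}/n\ll_z x(\log x)^{z-1}$.

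Combining the two displays gives
$$
\#\{n\le x:\Omega(n)\ge 10\log\log x\}\ll_z x(\log x)^{z-1-10\log z},
$$
and it remains only to choose $z$ so that the exponent is at most $-2$, i.e. $10\log z-(z-1)\ge 2$. The choice $z=3/2$ makes the exponent $\tfrac12-10\log(3/2)<-3$, which is comfortably enough; the threshold constant $10$ leaves a great deal of slack, and any fixed $z\in(1,2)$ sufficiently close to $2$ would in fact give an even stronger saving.

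I expect the only genuine obstacle to be the middle step, namely the transfer from the logarithmically weighted sum to $S(x)$. A bare Euler-product upper bound controls only the weighted sum $\sum_{n\le x}z^{\Omega(n)}/n$, and recovering the unweighted sum with the correct single power of $\log x$ is where one must invoke the standard $x/\log x$ comparison for sums of nonnegative multiplicative functions (or, if one prefers a self-contained route, reprove the bound directly by Rankin's trick applied to a truncated Euler product). Everything else is a routine application of Mertens' theorem and an elementary optimization of the constant $z$.
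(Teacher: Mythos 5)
Your proof is correct (and in fact delivers a stronger bound, roughly $O(x(\log x)^{1/2-10\log(3/2)})=O(x/(\log x)^{3})$), but it runs along a genuinely different track from the paper's. The paper quotes the level-set estimate $\#\{n\le x:\Omega(n)=k\}\ll xk(\log x)2^{-k}$, uniform in $k\ge 1$ (Exercise 05 on p.~12 of \cite{HT}), and simply sums it over $k\ge\lfloor 10\log\log x\rfloor$; the geometric decay gives $\ll x(\log x)(\log\log x)(\log x)^{-10\log 2}=O(x/(\log x)^2)$ since $10\log 2>6$. You instead run Rankin's method from scratch: Markov's inequality with the weight $z^{\Omega(n)}$, the Euler-product/Mertens bound $\sum_{n\le x}z^{\Omega(n)}/n\ll_z(\log x)^{z}$, and the mean-value comparison $\sum_{n\le x}f(n)\ll(x/\log x)\sum_{n\le x}f(n)/n$ for nonnegative multiplicative $f$ with $f(p^{\nu})\le\lambda_2^{\nu}$, $\lambda_2<2$ (Theorem 01 of \cite{HT}), which is precisely what your restriction $z<2$ secures. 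Both arguments exploit the same underlying phenomenon --- the barrier at $2$ created by high powers of $2$, which is why any threshold constant exceeding $2$ would do --- and both ultimately lean on a standard result from \cite{HT}; the paper's route is shorter once the level-set estimate is granted, while yours makes the dependence of the final exponent on the constant $10$ transparent and correctly isolates the weighted-to-unweighted transfer as the one step requiring a real theorem. Your numerics check out: $\tfrac12-10\log\tfrac32<-3<-2$.
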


\begin{proof}
Exercise 05 on page 12 in \cite{HT} shows that 
$$
\#\{n\le x: \Omega(n)=k\}\ll \frac{xk\log x}{2^k}
$$
uniformly in $k\ge 1$ and $x\ge 2$. Taking $K:=\lfloor 10\log\log x\rfloor$ and applying the above estimate with $k\ge K$, we get that 
\begin{eqnarray*}
\#\{n\le x: \Omega(n)\ge 10\log\log x\} & \ll x & \log x\sum_{k\ge K} \frac{k}{2^k}\ll \frac{xK\log x}{2^K}\\
& \ll & \frac{x\log x\log\log x}{2^{10\log\log x}}=\frac{x\log x\log\log x}{(\log x)^{10\log 2}}\\
& = & O\left(\frac{x}{(\log x)^2}\right).
\end{eqnarray*}
\end{proof}
Let $\tau(n)$ be the number of divisors of $n$. 

\begin{lemma}
\label{lem:3}
Let $x\ge 10$. The number of positive integers $n\le x$ such that $\tau(\sigma(n))>\exp({\sqrt{\log x}})$ is $O(x/(\log x)^2)$. 
\end{lemma}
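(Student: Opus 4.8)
The plan is to reduce the statement to an upper bound for $\Omega(\sigma(n))$. Since $\tau(m)=\prod_{p^a\|m}(a+1)\le\prod_{p^a\|m}2^a=2^{\Omega(m)}$ for every $m$, it suffices to show that all but $O(x/(\log x)^2)$ of the integers $n\le x$ satisfy $\Omega(\sigma(n))\le\sqrt{\log x}/\log 2$, because for such $n$ one gets $\tau(\sigma(n))\le 2^{\Omega(\sigma(n))}\le\exp(\sqrt{\log x})$. Using multiplicativity I would write $\Omega(\sigma(n))=\sum_{p^a\|n}\Omega(\sigma(p^a))$ and control the right-hand side after removing three thin exceptional sets, each of cardinality $O(x/(\log x)^2)$.

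First I discard the $n\le x$ with $\Omega(n)>10\log\log x$; by Lemma \ref{lem:2} these number $O(x/(\log x)^2)$. Next I discard the $n$ whose squarefull part $s(n):=\prod_{p^a\|n,\,a\ge2}p^a$ exceeds $(\log x)^4$. Since the number of squarefull integers up to $t$ is $O(\sqrt t)$, a partial summation gives $\sum_{m>(\log x)^4,\ m\ \mathrm{squarefull}}1/m\ll(\log x)^{-2}$, so the count of $n\le x$ divisible by a squarefull number larger than $(\log x)^4$ is $\ll x/(\log x)^2$. For every surviving $n$ the prime powers $p^a\|n$ with $a\ge2$ contribute only $O(\log\log x)$ to $\Omega(\sigma(n))$: indeed $\prod_{p^a\|n,\,a\ge2}\sigma(p^a)<\prod_{p^a\|n,\,a\ge2}p^{a+1}\le s(n)^2\le(\log x)^8$, whence the $\Omega$ of this part is at most $\log_2(s(n)^2)=O(\log\log x)$.

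It then remains to handle the contribution $\sum_{p\|n}\Omega(p+1)$ of the primes dividing $n$ exactly. Set $M:=\sqrt{\log x}/(20\log 2\,\log\log x)$ and remove every $n\le x$ divisible by some prime $p$ with $\Omega(p+1)>M$. On the complement, since $\omega(n)\le\Omega(n)\le10\log\log x$, this contribution is at most $\omega(n)\,M\le\tfrac{1}{2\log 2}\sqrt{\log x}$; combined with the $O(\log\log x)$ term from the previous paragraph, this yields $\Omega(\sigma(n))\le\sqrt{\log x}/\log 2$ for all large $x$, as required. The number of removed $n$ is at most $x\sum_{p\le x,\ \Omega(p+1)>M}1/p$, so the entire argument reduces to proving that $\sum_{p\le x,\ \Omega(p+1)>M}1/p=o\bigl(1/(\log x)^2\bigr)$.

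This last sum is the main obstacle, and I would treat it by Rankin's trick together with an elementary reduction that sidesteps any sieving of shifted primes. Fix $z\in(1,2)$, say $z=3/2$, and use $1/p\le 2/(p+1)$ and the inclusion $\{p+1:p\le x\}\subseteq\{m\le x+1\}$ to obtain
\[
\sum_{\substack{p\le x\\ \Omega(p+1)>M}}\frac1p\;\le\;z^{-M}\sum_{p\le x}\frac{z^{\Omega(p+1)}}{p}\;\le\;2z^{-M}\sum_{m\le x+1}\frac{z^{\Omega(m)}}{m}\;\le\;2z^{-M}\prod_{p\le x+1}\Bigl(1-\frac zp\Bigr)^{-1}\ll_z z^{-M}(\log x)^{z},
\]
where the penultimate step bounds the sum of the nonnegative multiplicative function $z^{\Omega(\cdot)}$ by its Euler product (valid since $z<2$), and the last step is Mertens' theorem. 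Because $M\gg\sqrt{\log x}/\log\log x$ grows faster than any fixed multiple of $\log\log x$, the factor $z^{-M}=\exp(-M\log z)$ beats every power of $\log x$, so the sum is indeed $o(1/(\log x)^2)$ and the removed set has cardinality $O(x/(\log x)^2)$. Assembling the three exceptional sets completes the proof. The only delicate point is arranging the exponential saving from Rankin's trick to dominate the polynomial-in-$\log x$ Euler-product factor, which is precisely what forces $M$ to have size $\sqrt{\log x}/\log\log x$ rather than a constant multiple of $\sqrt{\log x}$.
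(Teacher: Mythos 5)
Your argument is correct, and it is genuinely different from the one in the paper. The paper's proof is essentially a one-line application of Markov's inequality to the Luca--Pomerance estimate $\sum_{n\le x}\tau(\sigma(n))=x\exp\bigl(O((\log x/\log\log x)^{1/2})\bigr)$ from \cite{LP}: dividing by the threshold $\exp(\sqrt{\log x})$ immediately gives an exceptional set of size $x\exp\bigl(-\sqrt{\log x}(1+o(1))\bigr)$, which is far smaller than $x/(\log x)^2$. You instead replace $\tau$ by $2^{\Omega}$ and control $\Omega(\sigma(n))$ directly via the decomposition $\Omega(\sigma(n))=\sum_{p^a\| n}\Omega(\sigma(p^a))$, discarding three thin sets ($\Omega(n)$ large, squarefull part large, some $p\| n$ with $\Omega(p+1)$ large) and handling the last one by Rankin's trick with the injection $p\mapsto p+1$ into $\{m\le x+1\}$ to avoid sieving shifted primes; all the steps check out, including the key point that $z^{-M}$ with $M\asymp\sqrt{\log x}/\log\log x$ dominates the $(\log x)^{z}$ Euler-product factor. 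What the paper's route buys is brevity and a much stronger exceptional-set bound, at the price of invoking a nontrivial external theorem; your route is self-contained (needing only Lemma \ref{lem:2}, the $O(\sqrt t)$ count of squarefull numbers, and Mertens), and along the way proves the stronger pointwise statement that $\Omega(\sigma(n))\le\sqrt{\log x}/\log 2$ outside a set of size $O(x/(\log x)^2)$ --- here the bottleneck $O(x/(\log x)^2)$ comes from your squarefull and $\Omega(n)$ removals rather than from the Rankin step, which saves $\exp(-c\sqrt{\log x}/\log\log x)$.
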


\begin{proof}
Theorem 1 in \cite{LP} shows that 
$$
\sum_{n\le x} \tau(\phi(n))=x\exp\left(c(x)\left(\frac{\log x}{\log\log x}\right)^{1/2} \left(1+O\left(\frac{\log\log\log x}{\log x}\right)\right)\right),
$$
where $c(x)\in [e^{-\gamma/2}/7,2{\sqrt{2}}e^{-\gamma/2}]$ and $\gamma=0.577\ldots$ is the Euler-Mascheroni constant. The remarks on page 128 of the same paper show that the above 
estimate holds with $\phi(n)$ replaced by $\sigma(n)$. In particular, 
\begin{eqnarray*}
\#\{n\le x: \tau(\sigma(n))>\exp({\sqrt{\log x}})\} \exp({\sqrt{\log x}})  &  \le &   \sum_{n\le x} \tau(\sigma(n))\\
& < & \exp\left(O\left(\frac{\log x}{\log\log x}\right)^{1/2}\right),
\end{eqnarray*}
which gives that 
\begin{eqnarray*}
\#\{n\le x: \tau(\sigma(n))>\exp({\sqrt{\log x}})\} & < & \exp\left(-{\sqrt{\log x}}+O\left(\left(\frac{\log x}{\log\log x}\right)^{1/2}\right)\right)\\
 & = & O\left(\frac{x}{(\log x)^2}\right).
 \end{eqnarray*}
\end{proof}

\subsection{ The subspace theorem and linearly recurrent sequences} 

Here, we review a quantitative version of the Subspace theorem due to Evertse from \cite{Ev} and apply it to nondegenerate linearly recurrent sequences of integers.
Let ${\mathbb K}$ be an algebraic number field with ring of integers ${\mathcal O}_{\mathbb K}$ and collection of places (equivalence classes of absolute values) $M_{\mathbb K}$. For $v\in M_{\mathbb K}$ and $x\in {\mathbb K}$, we define the absolute value $|x|_v$ as follows
$$
|x|_v:=\left\{\begin{matrix} 
|\sigma(x)|^{\frac{1}{[{\mathbb K}:{\mathbb Q}]}} & {\text{\rm if {\it v} corresponds to}} & \sigma:{\mathbb K}\mapsto {\mathbb R};\\
|\sigma(x)|^{\frac{2}{[{\mathbb K}:{\mathbb Q}]}} & {\text{\rm if {\it v} corresponds to}} & {\text{\rm the pair}}~~ \sigma,{\overline{\sigma}}: {\mathbb K}\mapsto {\mathbb C};\\
N(\pi)^{-\frac{{\text{\rm ord}}_{\pi} (x)}{[{\mathbb K}:{\mathbb Q}]}}& {\text{\rm if {\it v} corresponds to}} & {\text{\rm the prime ideal}}  ~~\pi\subset {\mathcal O}_{\mathbb K}.
\end{matrix}
\right.
$$
Here, $N(\pi):=\#({\mathcal O}_{\mathbb K}/\pi)$ is the norm of $\pi$ and ${\text{\rm ord}}_\pi(x)$ is the exponent of $\pi$ in the factorization of the principal fractional ideal $(x)$ of ${\mathbb K}$ with the convention that ${\text{\rm ord}}_{\pi}(0)=\infty$. In the first two cases above we call $v$ real infinite or complex infinite, respectively, while in the third case we call $v$ finite. These absolute values satisfy the product formula 
$$
\prod_{v\in M_{\mathbb K}} |x|_v=1\qquad {\text{\rm for~all}}\qquad x\in {\mathbb K}^*.
$$
Now let $s\ge 2$, ${\bf x}:=(x_1,\ldots,x_s)\in {\mathbb K}^s$ with ${\bf x}\ne 0$ and define 
$$
|{\bf x}|_v:=\left\{\begin{matrix} 
\left(\sum_{i=1}^s x_i^{2[{\mathbb K}:{\mathbb Q}]}\right)^{\frac{1}{2[{\mathbb K}:{\mathbb Q}]}} & {\text{\rm if {\it v} is real infinite}};\\
\left(\sum_{i=1}^s |x_i|^{[{\mathbb K}:{\mathbb Q}]}\right)^{\frac{1}{[{\mathbb K}:{\mathbb Q}]}} & {\text{\rm if {\it v} is complex infinite}};\\
\max\{|x_1|_v,\ldots,|x_s|_v\} & {\text{\rm if {\it v} is finite}}.
\end{matrix}
\right.
$$
Note that for infinite places $v$, $|\cdot|_v$ is a power of the Euclidean norm. Define
$$
H({\bf x}):=\prod_{v\in M_{\mathbb K}} |{\bf x}|_v.
$$
In the statement of the next result, the following apply:
\begin{itemize}
\item ${\mathbb K}$ is an algebraic number field;
\item ${\mathcal S}$ is a finite subset of $M_{\mathbb K}$ of cardinality $r$ containing all the infinite places;
\item $\{l_{1,v},\ldots,l_{s,v})$ for $v\in {\mathcal S}$ are linearly independent sets of linear forms with algebraic coefficients in $s$ variables such that 
$$
H(l_{i,v})\le H,\qquad [{\mathbb K}(l_{i,v)}:{\mathbb K}]\le D,
$$
for all $i=1,\ldots,s$ and $v\in {\mathcal S}$.
\end{itemize}

The following is the main Theorem in \cite{Ev}.

\begin{theorem}
\label{thm:ev}
Let $0<\delta<1$. Consider the inequality 
\begin{equation}
\label{eq:xxx}
\prod_{v\in {\mathcal S}} \prod_{i=1}^s \frac{|l_{i,v}({\bf x})|_v}{|{\bf x}|_v}<\left(\prod_{v\in {\mathcal S}} |{\text{\rm det}}(l_{1,v},\ldots,l_{s,v})|_v\right) H({\bf x})^{-s-\delta},\qquad {\bf x}\in {\mathbb K}^s.
\end{equation}
Then
\begin{itemize}
\item[(i)] There are proper linear subspaces $T_1,\ldots,T_{t_1}$ with 
$$
t_1\le (2^{60s^2}\delta^{-7s})^r\log(4D) \log\log(4D)
$$
such that every solution ${\bf x}\in {\mathbb K}^s$  to \eqref{eq:xxx} with $H({\bf x})\ge H$ satisfies 
$${\bf x}\in T_1\cup \cdots\cup T_{t_1}.$$
\item[(ii)] There are proper linear subspaces $S_1,S_2,\ldots,S_{t_2}$ of ${\mathbb K}^s$ with 
$$
t_2\le (150s^4\delta^{-1})^{sr+1} (2+\log\log (2H))
$$
such that every solution ${\bf x}\in {\mathbb K}^s$ of \eqref{eq:xxx} with $H({\bf x})< H$ satisfies 
$$
{\bf x}\in S_1\cup S_2\cup \cdots \cup S_{t_2}.
$$
\end{itemize}
\end{theorem}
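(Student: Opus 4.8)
The plan is to prove the quantitative Subspace Theorem by the Thue--Siegel--Roth--Schmidt strategy in its adelic, higher-dimensional form, as carried out by Schmidt, Schlickewei and Evertse; the two parts of the statement reflect the standard dichotomy between the \emph{large} solutions, those with $H({\bf x})\ge H$, and the \emph{small} ones, with $H({\bf x})<H$. First I would normalize the data: scaling each form $l_{i,v}$ and applying a $\K$-linear change of coordinates leaves the left side of \eqref{eq:xxx} invariant up to controlled factors and rescales the determinant term, so, absorbing the determinants into the exponent at the cost of replacing $\delta$ by $\delta/2$, one reduces to the clean inequality $\prod_{v\in\cS}\prod_{i=1}^{s}|l_{i,v}({\bf x})|_v/|{\bf x}|_v<H({\bf x})^{-s-\delta}$ with forms of height $\le H$ and degree $\le D$. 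The product formula and elementary height inequalities are used throughout to pass between the local sizes $|{\bf x}|_v$ and the global height $H({\bf x})$.

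Next I would set up the geometry of numbers. Fixing a scale $Q$ and a weight vector $(c_{i,v})$ with $\sum_{i,v}c_{i,v}=0$, form the adelic parallelepiped $\Pi(Q)=\{{\bf x}:|l_{i,v}({\bf x})|_v\le Q^{c_{i,v}}\ \text{for all }i,v\}$; after the normalization its volume is $\asymp 1$, so by the adelic form of Minkowski's second theorem its successive minima satisfy $\lambda_1\cdots\lambda_s\asymp 1$. A solution ${\bf x}$ whose height is matched to $Q$ sits so deep inside $\Pi(Q)$ that $\lambda_s$ must be large, forcing $\lambda_1\cdots\lambda_{s-1}$ small and ${\bf x}$ into the proper subspace $V=V(Q)$ spanned by the minimal vectors realizing the small minima; this $V$ is the \emph{approximation subspace} attached to that scale and weight. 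Since only the weights matter up to mesh $\asymp\delta$, a covering of the weight simplex at that mesh shows that at each of the $r$ places of $\cS$ there are $\delta^{-O(s)}$ relevant weight vectors, which is the source of the $\delta^{-7s}$- and $\delta^{-1}$-type factors raised to powers $r$ and $sr+1$.

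The hard part will be the Roth machine, which controls the large solutions and bounds, \emph{independently of their height}, the number of distinct approximation subspaces that can arise as $Q$ ranges over a geometric progression. Assuming for contradiction that $m$ linearly good solutions ${\bf x}^{(1)},\ldots,{\bf x}^{(m)}$ occur at widely separated scales $Q_1\ll\cdots\ll Q_m$, I would use Siegel's lemma (pigeonhole via Minkowski's first theorem over $\mathcal O_\K$) to construct an auxiliary polynomial $P$ in $m$ blocks of $s$ variables, of prescribed small multidegree and controlled height, vanishing to high order. The extreme smallness of the $|l_{i,v}({\bf x}^{(j)})|_v$ forces the \emph{index} of $P$ at the point $({\bf x}^{(1)},\ldots,{\bf x}^{(m)})$ to be large at every place simultaneously, whereas a generalized Roth lemma, namely Roth's original nonvanishing lemma iterated and sharpened as by Schmidt or in product-theorem form, bounds that index from above once the multidegrees decrease rapidly enough. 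Balancing the two index estimates and choosing $m$ of size about $\delta^{-2}\log(\cdots)$ yields the contradiction. The two genuinely delicate points are controlling the index at all $r$ places with the right power of $\delta$, and keeping the height of $P$ small enough for Roth's lemma to apply; it is exactly this balancing that fixes $m$ and hence the exponent $7s$ and the base $2^{60s^2}$ in part (i).

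Finally I would assemble the count. For part (i) the Roth machine bounds the number of approximation subspaces for the large solutions with no dependence on $H$; the residual factor $\log(4D)\log\log(4D)$ comes from reducing the forms, whose coefficients generate extensions of degree $\le D$ over $\K$, to conjugate data that can be handled place by place. For part (ii) the solutions have height $<H$, which is too small for the Roth machine, so instead I would cover the height range by the squaring gap principle $Q\mapsto Q^2$, which reaches $H$ in $O(\log\log(2H))$ steps, and apply the single geometry-of-numbers step of the second paragraph on each block; this produces the factor $(2+\log\log(2H))$ times the per-block bound $(150 s^4\delta^{-1})^{sr+1}$. Collecting the contributions of all admissible weight vectors and all blocks, and recording that each approximation subspace is a proper subspace of $\K^{s}$, yields the two families $T_1,\ldots,T_{t_1}$ and $S_1,\ldots,S_{t_2}$ with the asserted cardinalities.
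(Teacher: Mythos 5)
There is no proof of Theorem \ref{thm:ev} in the paper to compare against: the authors quote it verbatim as the main theorem of Evertse's paper \cite{Ev} and use it as a black box in the proof of Lemma \ref{lem:delta}. Your attempt is therefore, in effect, an attempt to reprove Evertse's quantitative subspace theorem from scratch, and judged on that basis it has a genuine gap: it is a roadmap of the Thue--Siegel--Roth--Schmidt strategy, not a proof. The entire content of the statement lies in the explicit bounds --- the exponent $7s$ and base $2^{60s^2}$ raised to the power $r$, the factor $\log(4D)\log\log(4D)$ in (i), and the bound $(150 s^4\delta^{-1})^{sr+1}(2+\log\log(2H))$ in (ii); the qualitative assertion that the solutions lie in finitely many proper subspaces is Schmidt's much older theorem. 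Nothing in your sketch derives, or could derive, these constants: you yourself defer the two decisive steps (the lower bound for the index of the auxiliary polynomial at all $r$ places simultaneously with the right power of $\delta$, and the balancing against Roth's lemma that, in your words, ``fixes $m$ and hence the exponent $7s$ and the base $2^{60s^2}$''), and the claim that covering the weight simplex at mesh $\asymp\delta$ produces exactly the factors $\delta^{-7s}$ and $\delta^{-1}$ is an assertion, not an argument. Carrying these steps out with explicit constants is precisely what occupies Evertse's long and technical paper.

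There are also two concrete soft spots in the reductions you do describe. First, the normalization that ``absorbs the determinants into the exponent at the cost of replacing $\delta$ by $\delta/2$'' requires $H({\bf x})$ to be large compared with a quantity depending on the forms, so it cannot be applied uniformly in part (ii), where the solutions have height below $H$; the determinant factor is what makes Evertse's inequality scaling-invariant, and it must be carried along rather than discarded. Second, in part (i) the subspace count is uniform in the heights of the forms (only the degree bound $D$ enters, through $\log(4D)\log\log(4D)$); your sketch attributes this to ``reducing to conjugate data'' without justification, whereas this uniformity is exactly the hard-won point of Schmidt-type arguments. For the purposes of the paper under review, the correct treatment of Theorem \ref{thm:ev} is the one the authors adopt: cite \cite{Ev} and use the statement as given. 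If you intend to give a proof, your outline would have to be expanded to the full argument of that paper, which cannot be reconstructed from the plan as written.
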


We present an application to small values of nondegenerate linearly recurrent sequences. But before, let us record the following result of Schmidt \cite{Sch}. 
For a nondegenerate linearly recurrent sequence $(U_n)_{n\ge 1}$ let
$$
{\mathcal Z}_U:=\#\{n: U_n=0\}.
$$

\begin{theorem}
\label{thm:Schmidt}
If $(U_n)_{n\ge 1}$ is  a nondegenerate linearly recurrent sequence of order $k\ge 2$ whose terms are complex numbers, then 
$$
\#{\mathcal Z}_U\le \exp(\exp(\exp(3k\log k))).
$$
\end{theorem}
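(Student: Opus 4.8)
The plan is to follow Schmidt's method, whose engine is the quantitative Subspace theorem (our Theorem~\ref{thm:ev}) combined with an induction on the order $k$; I only indicate the architecture of the argument. First I would pass to the Binet form \eqref{eq:Binet}. It is cleanest to treat first the \emph{simple} case, in which all $P_i$ are constants $c_i$ and $U_n=\sum_{i=1}^{k}c_i\alpha_i^n$ has $s=k$ distinct roots. Dividing the relation $U_n=0$ by $-c_1\alpha_1^n$ turns it into the inhomogeneous equation
\[
\sum_{i=2}^{k}a_i\beta_i^{\,n}=1,\qquad a_i:=-\,c_i/c_1,\quad \beta_i:=\alpha_i/\alpha_1 .
\]
Nondegeneracy says that no $\beta_i$ is a root of unity, so the $\beta_i$ generate a multiplicative group $\Gamma\subset\K^{*}$ of finite rank $r\le k-1$, every zero $n$ produces the point $(\beta_2^{\,n},\dots,\beta_k^{\,n})$ solving this one fixed linear equation, and distinct $n$ give distinct points. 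Throughout I would work in $\K:=\Q(\alpha_1,\dots,\alpha_k,c_1,\dots,c_k)$, whose degree $D=[\K:\Q]$ and relevant place set $\mathcal S$ both have size bounded solely in terms of $k$.

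Second, I would bound the number of \emph{nondegenerate} zeros, namely those $n$ for which no proper subsum $\sum_{i\in I}a_i\beta_i^{\,n}$ vanishes. For such $n$ I would form the projective point $\mathbf x_n$ with coordinates $\beta_2^{\,n},\dots,\beta_k^{\,n}$ and $1$; at each place $v\in\mathcal S$ one selects the subsystem of coordinate forms and the sum form that is smallest on $\mathbf x_n$, and then verifies inequality \eqref{eq:xxx} using the product formula together with the crucial growth estimate $\log H(\mathbf x_n)=c\,n+O(1)$ with $c>0$, which holds precisely because some $\beta_i$ is not a root of unity. Theorem~\ref{thm:ev}(i) then confines all large–height solutions to at most
\[
t_1\le\bigl(2^{60k^2}\delta^{-7k}\bigr)^{r}\log(4D)\log\log(4D)
\]
proper subspaces. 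A solution lying in one of these subspaces satisfies a linear relation among the $\beta_i^{\,n}$ beyond the defining one; combining the two relations lets one cancel a term and thereby realize such an $n$ as a zero of a nondegenerate recurrence of order at most $k-1$.

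Third, I would close the induction. Writing $N(k)$ for the largest possible value of $\#{\mathcal Z}_U$ over nondegenerate recurrences of order at most $k$, the $n$ of bounded height number $O_k(1)$, the nondegenerate large–height zeros number at most $t_1$, and every remaining (degenerate) zero lies on one of the $t_1$ subspaces and is therefore, after elimination, a zero of one of finitely many nondegenerate recurrences of order $\le k-1$. This yields a recursion of the shape $N(k)\le O_k(1)+t_1\bigl(1+N(k-1)\bigr)$, and the general polynomial–coefficient case is folded in either by a specialization to the simple case or by running the same argument with the $k$ generalized monomials $n^{j}\alpha_i^{\,n}$ in place of the $\beta_i^{\,n}$ (which only replaces $O(1)$ by $O(\log n)$ in the height estimate). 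Tracking all the quantitative inputs through the recursion, with $r\le k-1$ sitting in the exponent of $t_1$ and with the bookkeeping over the up to $2^{k}$ possible supports that survive the elimination, is what assembles the final tower $\exp(\exp(\exp(3k\log k)))$.

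The hard part is exactly this last bookkeeping: controlling the \emph{degenerate} zeros without losing uniformity, so that the final bound depends on $k$ alone and not on the roots $\alpha_i$ or the coefficients. Nondegeneracy is the hinge used twice — once to force $\log H(\mathbf x_n)\to\infty$, so that the large–height part of Theorem~\ref{thm:ev} governs all but $O_k(1)$ values of $n$ and the map $n\mapsto\mathbf x_n$ is injective, and once to guarantee that each recurrence produced by elimination is again nondegenerate, so that the induction on $k$ can actually be iterated. Making every constant ($D$, $\#\mathcal S$, $t_1$, and the number of admissible supports) depend on $k$ alone, verifying that the subspace relations genuinely shorten the recurrence rather than being proportional to the defining relation, and compounding all of these correctly through the induction, is the delicate step that dictates the triply–exponential shape of the bound.
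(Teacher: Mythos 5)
The paper does not actually prove this statement: Theorem~\ref{thm:Schmidt} is quoted verbatim from Schmidt's paper \cite{Sch} and used as a black box, so there is no internal proof to compare yours against. Judged on its own terms, your sketch correctly identifies the classical engine (quantitative Subspace Theorem plus induction on the order, as in the earlier work of van der Poorten, Schlickewei and Evertse), but it breaks down at exactly the point where Schmidt's contribution lies. In Theorem~\ref{thm:ev} the exponent $r$ is the cardinality of the place set $\mathcal S$, and to make your points $(\beta_2^{\,n},\dots,\beta_k^{\,n},1)$ amenable to the theorem you must put into $\mathcal S$ every finite place at which some $\beta_i$ is not a unit; the number of such places is governed by the prime ideal factorizations of the roots and is \emph{not} bounded in terms of $k$. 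Your assertion that $\mathcal S$ has ``size bounded solely in terms of $k$'', and the subsequent bound for $t_1$ with ``$r\le k-1$ sitting in the exponent'', conflate $\#\mathcal S$ with the rank of the multiplicative group generated by $\beta_2,\dots,\beta_k$ (which is indeed at most $k-1$, but is not the parameter that appears in Evertse's estimate). Run as written, the argument yields a zero-multiplicity bound depending on $k$ \emph{and} on the arithmetic of the $\alpha_i$ and $c_i$ --- which was already known before 1999 --- rather than the uniform bound $\exp(\exp(\exp(3k\log k)))$ depending on $k$ alone.

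Eliminating that extra dependence is the entire content of Schmidt's forty-page paper and requires machinery absent from your outline: a reduction to the case of algebraic data of controlled height via specialization, a structure theory for the solution set of the associated polynomial--exponential equation, and a careful uniform treatment of the vanishing and ``almost vanishing'' subsums. The induction step is also shakier than you present it: a zero lying in one of the exceptional subspaces yields, after elimination against the defining relation, a recurrence of order at most $k-1$ that need not be nondegenerate, so the recursion $N(k)\le O_k(1)+t_1\bigl(1+N(k-1)\bigr)$ cannot be closed without first partitioning the zeros according to which subsums vanish and re-establishing nondegeneracy on each piece, again uniformly in everything but $k$. In short, the proposal is a reasonable road map to the pre-Schmidt, non-uniform bounds, but it does not prove the theorem as stated; for the purposes of this paper the correct move is what the authors do, namely to cite \cite{Sch}.
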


Now let $(U_n)_{n\ge 1}$ be a nondegenerate linearly recurrent sequence of integers given by recurrence \eqref{eq:recurrence}, whose characteristic polynomial is given by \eqref{eq:Charpoly} and 
formula for the general term \eqref{eq:Binet}. Assume that $|\alpha_1|\ge |\alpha_2|\ge \cdots \ge |\alpha_s|$ and that $|U_n|$ is not a polynomial in $n$ for large $n$. In particular, $|\alpha_1|>1$. 

We prove the following lemma.

\begin{lemma}
\label{lem:delta}
Let $(U_n)_{n\ge 1}$ be a nondegenerate  linearly recurrent sequence of integers whose general term is given by \eqref{eq:Binet} with $s\ge 2$ and assume that $|\alpha_1|=\max\{|\alpha_j|: 1\le j\le s\}$. Then there exists $x_0$ and $c:=c(U)\in (0,1/3)$ such 
that for $x\ge x_0$ the number of $n\le x$ such that 
\begin{equation}
\label{eq:smallUn}
|U_n|\le |\alpha_1|^{n(1-\delta)},
\end{equation}
with $\delta:=x^{-c}$ is of cardinality $O_U({\sqrt{x}})$.
\end{lemma}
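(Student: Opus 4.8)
The plan is to encode the smallness of $|U_n|$ as a Diophantine inequality to which Evertse's Theorem~\ref{thm:ev} applies, and then to convert membership in each resulting proper subspace into a shorter nonzero linear recurrence whose zeros are controlled by Schmidt's Theorem~\ref{thm:Schmidt}. I would work in the number field $\mathbb{K}=\mathbb{Q}(\alpha_1,\dots,\alpha_s)$, which also contains all coefficients of the $P_i$; put $d=[\mathbb{K}:\mathbb{Q}]$, and note that since $\Psi(X)$ is monic with integer coefficients the $\alpha_i$ are algebraic integers. Writing $P_i(X)=\sum_{j=0}^{\sigma_i-1}p_{ij}X^j$ and $k=\sum_i\sigma_i$, I index the coordinates of $\mathbb{K}^k$ by the pairs $(i,j)$ and set
$$
\mathbf{x}_n:=\big(n^{\,j}\alpha_i^{\,n}\big)_{(i,j)}\in\mathbb{K}^k,\qquad L(\mathbf{X}):=\sum_{(i,j)}p_{ij}X_{(i,j)},
$$
so that $L(\mathbf{x}_n)=U_n$. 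Let $\mathcal{S}\subset M_{\mathbb{K}}$ consist of all infinite places together with the finitely many finite places at which some $\alpha_i$ or $p_{ij}$ is not a unit, and put $r:=\#\mathcal{S}$; then every $\alpha_i$ is an $\mathcal{S}$-unit. The $n$ with $U_n=0$ number $O_U(1)$ by Theorem~\ref{thm:Schmidt}, so I may assume $U_n\ne 0$.

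\textbf{From smallness to Evertse's inequality.} For each infinite place $v$ let $d(v)$ realize $\max_i|\alpha_i|_v$; at $v$ I take the $k$ coordinate forms, except that the one singling out the archimedean dominant coordinate $n^{\sigma_{d(v)}-1}\alpha_{d(v)}^{\,n}$ is replaced by $L$ (its coefficient there is the leading coefficient of $P_{d(v)}$, hence nonzero, so the forms stay independent), and at finite places of $\mathcal{S}$ I take all $k$ coordinate forms. Since the $\alpha_i$ are $\mathcal{S}$-units the product formula gives $\prod_{v\in\mathcal{S}}\prod_{(i,j)}|n^{\,j}\alpha_i^{\,n}|_v=\exp(O(\log n))$, while Galois invariance of the root multiset yields $\prod_{v\mid\infty}|\alpha_{d(v)}^{\,n}|_v=|\alpha_1|^{\,n}$; as $|\mathbf{x}_n|_v=1$ for finite $v\notin\mathcal{S}$, the left side of \eqref{eq:xxx} at $\mathbf{x}_n$ equals
$$
\frac{|U_n|\,\exp(O(\log n))}{|\alpha_1|^{\,n}\,H(\mathbf{x}_n)^{k}}.
$$
The decisive estimate is for $H(\mathbf{x}_n)$: with $M:=\sum_{v\in\mathcal{S}}\max_i\log|\alpha_i|_v$ one has $\log H(\mathbf{x}_n)=Mn+O(\log n)$, where the infinite places contribute exactly $\log|\alpha_1|$ (Galois invariance together with $\sum_{v\mid\infty}[\mathbb{K}_v:\mathbb{R}]=d$) and each finite place contributes a nonpositive amount (the $\alpha_i$ are algebraic integers), whence
$$
0<M\le\log|\alpha_1|,
$$
the strict positivity coming from Kronecker's theorem and nondegeneracy. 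Feeding $|U_n|\le|\alpha_1|^{n(1-\delta)}$ and $H(\mathbf{x}_n)\le|\alpha_1|^{(1+o(1))n}$ into the displayed quotient shows that $\mathbf{x}_n$ solves \eqref{eq:xxx} (with the parameter taken to be $\delta/2$, say) as soon as $\delta n$ exceeds a fixed multiple of $\log n$, i.e.\ for all $n$ outside an initial segment of length $O(x^{c}\log x)=o(\sqrt{x})$, which I discard for free.

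\textbf{Counting via subspaces and Schmidt.} Applying Theorem~\ref{thm:ev} with dimension $k$, parameter $\delta/2=\tfrac12 x^{-c}$, and constants $D,H$ depending only on $U$: since $H(\mathbf{x}_n)\to\infty$, only $O_U(1)$ values of $n$ have $H(\mathbf{x}_n)<H$, so part~(ii) is negligible, and part~(i) places every remaining $\mathbf{x}_n$ in one of $t_1\ll_U\delta^{-7kr}=x^{\,7krc}$ proper subspaces. If $\mathbf{x}_n$ lies in such a subspace $T$, a defining relation of $T$ reads $\sum_{(i,j)}c_{ij}n^{\,j}\alpha_i^{\,n}=0$, i.e.\ $V_n=0$ for $V_n:=\sum_i Q_i(n)\alpha_i^{\,n}$ with $Q_i(X)=\sum_j c_{ij}X^j$ not all zero. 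As the functions $n^{\,j}\alpha_i^{\,n}$ are linearly independent, $V_n\not\equiv 0$; it is a nondegenerate recurrence of order $\le k$ (or a single nonzero $Q_{i_0}(n)\alpha_{i_0}^{\,n}$), so it has $O_U(1)$ zeros by Theorem~\ref{thm:Schmidt}. Hence each subspace contributes $O_U(1)$ values of $n$, and the number of $n\le x$ satisfying \eqref{eq:smallUn} is
$$
O_U\!\big(x^{c}\big)+t_1\cdot O_U(1)\ll_U x^{\,7krc}.
$$
Choosing $c:=1/(14kr)\in(0,1/3)$ makes this $O_U(\sqrt{x})$, as claimed.

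\textbf{Main obstacle.} The crux is the middle step, namely verifying that the purely archimedean smallness $|U_n|\le|\alpha_1|^{n(1-\delta)}$ genuinely forces Evertse's inequality. This rests on the height bound $H(\mathbf{x}_n)\le|\alpha_1|^{(1+o(1))n}$, equivalently $M\le\log|\alpha_1|$, which depends on two arithmetic inputs that must be handled with care: the $\alpha_i$ being algebraic integers (so finite places cannot inflate the height) and the invariance of the root multiset under every embedding of $\mathbb{K}$ (so each infinite place contributes exactly $\log|\alpha_1|$). Once this is in place, the remaining bookkeeping --- the polynomial factors $n^{\,j}$, and the fixed constants $D,H$ and $\Delta=\prod_{v\in\mathcal{S}}|\det(\ldots)|_v$ --- is routine, being absorbed because $\delta\log x\to 0$ and $|\alpha_1|>1$.
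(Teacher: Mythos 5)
Your proof is correct and rests on the same two pillars as the paper's --- Evertse's quantitative Subspace Theorem~\ref{thm:ev} followed by Schmidt's zero--multiplicity bound (Theorem~\ref{thm:Schmidt}) applied to the recurrences cut out by the exceptional subspaces --- but the implementation differs in two substantive ways. First, you work in dimension $k$ with the separated coordinates $n^{j}\alpha_i^{n}$ and replace, at each infinite place $v$, the form dual to the \emph{locally} dominant coordinate $(d(v),\sigma_{d(v)}-1)$ by $L$; the paper instead works in dimension $s$ with the bundled coordinates $Q_i'(m)\alpha_i'^{m}$ and always replaces the first form. Second, and more importantly, the paper obtains the needed lower bound $H({\bf x})\ge H$ by an ideal-theoretic detour: it passes to the subsequences $U_{hdm+\ell}$ so as to replace the $\alpha_i$ by $\alpha_i'=\alpha_i^{hd}/b$ generating pairwise coprime ideals, which forces $|{\bf x}|_v\gg m^{-k}$ at every finite place. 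You avoid this entirely by computing $\log H({\bf x}_n)=Mn+O(\log n)$ with $M=\sum_{v\in{\mathcal S}}\max_i\log|\alpha_i|_v$ equal to the logarithmic projective height of $(\alpha_1:\cdots:\alpha_s)$, whose positivity follows from Kronecker's theorem and nondegeneracy (since $\alpha_2/\alpha_1$ is not a root of unity); this is cleaner and saves the paper's page of reduction. Your final bookkeeping ($t_1\ll_U x^{7krc}$, the choice $c:=1/(14kr)$, each subspace meeting the orbit in $O_U(1)$ points, and the discarded initial segment of length $o(\sqrt{x})$) is sound, as is your treatment of the single-root case $V_n=Q_{i_0}(n)\alpha_{i_0}^{n}$. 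The only points worth writing out more carefully are that $\prod_{v\mid\infty}\max_i|\alpha_i|_v=|\alpha_1|$ (each embedding of ${\mathbb K}$ permutes the roots of $\Psi$, so the local maximum is always $|\alpha_1|$ up to the normalizing exponent) and that the sub-recurrences $V_n$ inherit nondegeneracy from $(U_n)_{n\ge1}$; both assertions are correct as stated.
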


\begin{proof}
We may assume that $n\in (x^{1/2},x]$ since there are only $O(x^{1/2})$ positive integers $n\le x^{1/2}$. Using \eqref{eq:Binet}, inequality \eqref{eq:smallUn} becomes 
$$
|\sum_{i=1}^s P_i(n)\alpha_i^n|\le |\alpha_1|^{n(1-\delta)}.
$$
Let $L$ be a positive integer which is a common multiple of all the denominators of all the coefficients of $P_{i}(X)$ for $i=1,\ldots,s$. Multiplying across by $L$, we get by setting $Q_i(X):=LP_i(X)$ that 
\begin{equation}
\label{eq:L}
|\sum_{i=1}^s Q_i(n)\alpha_i^n|\le L|\alpha_1|^{n(1-\delta)}.
\end{equation}
Note now that $Q_i(n)\alpha_i^n\in {\mathcal O}_{\mathbb K}$, where ${\mathbb K}:={\mathbb Q}(\alpha_1,\ldots,\alpha_s)$ is an algebraic number field. For technical reasons, we would like to get rid of the greatest common divisor of the ideals $(\alpha_1),\ldots,(\alpha_s)$. So, let $I:=\gcd((\alpha_1),\ldots,(\alpha_s))$. Then $I^h$ is principal for some positive integer $h$ which can be taken to be the cardinality of the class group of ${\mathbb K}$. Let $\beta$ be a generator of $I^h$. Then $\beta$ divides $\alpha_i^h$ for all $i=1,\ldots,s$, so $(\alpha_1^h/\beta),\ldots,(\alpha_s^h/\beta)$ 
are coprime. Since $I$ is Galois invariant, any conjugate $\beta^{(j)}$ of $\beta$ is also a generator of $I$, so $\beta$ is associated to any of its conjugates. Letting $d$ be the degree of $\beta$, we get that 
$\alpha_i^h/\beta^{(j)}$ are all associated for $j=1,\ldots,d$ (and fixed $i\in \{1,\ldots,s\}$) and in particular they are also associated to $\alpha_i^{hd}/b$, where we can take 
$b:=N(\beta)$. Now we replace $(U_n)_{n\ge 1}$ with any of the $hd$ linearly recurrent sequences $(U_{hdm+\ell})_{m\ge 0}$ and $\ell\in \{0,1,\ldots,hd-1\}$ by fixing $\ell$. Then 
$$
U_{hdm+\ell}=b^m\sum_{i=1}^s Q_i'(m)\alpha_i'^m,
$$
where $Q_i'(X):=\alpha_i^{\ell}Q_i(hdX+\ell)\in {\mathcal O}_{\mathbb K}[X]$ and $\alpha_i':=\alpha_i^{hd}/b$ for $i=1,\ldots,s$. Inequality \eqref{eq:L} now implies 
\begin{equation}
\label{eq:subs}
|\sum_{i=1}^s Q_i'(m)\alpha_i'^m|\le L|\alpha_1|^{\ell}\left|\frac{\alpha_1^{hd}}{b}\right|^m\cdot \alpha_1^{-\delta (hdm+\ell)}=L'|\alpha_1'|^{m(1-\delta_1)},
\end{equation}
where $L':=L|\alpha_1|^{\ell(1-\delta)}$ and $\delta_1:=c_0\delta$, with $c_0:={\displaystyle{\frac{hd\log |\alpha_1|}{hd\log |\alpha_1|-\log b}}}$. 
Note that $|\alpha_1'|> 1$, for if not, then $|\alpha_i'^{hd}/b|\le 1$ holds for all $i=1,\ldots,s$. 
Since $\alpha_i'$ are algebraic integers having all the conjugates at most $1$, we get that they are in fact roots of unity. Thus, $\alpha_i/\alpha_j$ is a root of unity for all $i\ne j$, which contradicts 
the nondegeneracy assumption. 

We now set up the subspace machinery. We let ${\mathcal S}$ be the subset of $M_{\mathbb K}$ containing all the infinite valuations as well as all the finite ones $v$ such that $|\alpha_i'|_v\ne 1$ 
for some $i=1,\ldots,s$. We take ${\bf x}=(x_1,\ldots,x_s)$ and $l_{i,v}({\bf x})$ given by 
$$
l_{i,v}({\bf x}):=x_i\qquad {\text{\rm for~all}}\qquad (i,v)\in \{1,\ldots,s\}\times {\mathcal S}~{\text{\rm with}}~i\ge 2~{\text{\rm or}}~v~{\text{\rm finite}},
$$
and take 
$$
l_{1,v}({\bf x}):=x_1+\cdots+x_s\qquad {\text{\rm for}}\qquad v~{\text{\rm infinite}}.
$$
We evaluate 
\begin{equation}
\label{eq:dp}
\prod_{i=1}^s \prod_{v\in {\mathcal S}} |l_{i,v}({\bf x})|_v
\end{equation}
in ${\bf x}:=(Q_1'(m)\alpha_1'^m,\ldots,Q_s'(m)\alpha_s'^m)$  with some $m$ satisfying inequality \eqref{eq:subs}. For a fixed $i\ge 2$, we have 
$$
\prod_{v\in {\mathcal S}} |l_{i,v}({\bf x})|_v=\prod_{v\in {\mathcal S}} |Q'_i(m)\alpha_i'^m|_v\le \prod_{\substack{v~{\text{\rm infinite}}}} |Q_i'(m)|_v\ll m^{\sigma_i-1},
$$
where the implied constant depends on the coefficients of $Q_i'(x)$. The above inequality follows by the product formula for $\alpha_i'^m$ together with the fact that ${\mathcal S}$ contains 
all the places of $M_{\mathbb K}$ for which $|\alpha_i'|_v\ne 1$, together with the fact that $Q_i'(m)$ is an algebraic integer so $|Q_i'(m)|_v\le 1$ for all finite places $v$. Hence, 
$$
\prod_{i=2}^s \prod_{v\in {\mathcal S}} |l_{i,v}({\bf x})|_v\ll \prod_{i=2}^s m^{\sigma_i-1}=m^{\sum_{i=2}^s \sigma_i-1}.
$$
For $i=1$, we have that 
\begin{eqnarray*}
\prod_{v\in {\mathcal S}} |l_{1,v}({\bf x})|_v & = & \prod_{\substack{v\in {\mathcal S}\\  v~{\text{\rm finite}}}} |Q_1(m)'\alpha_1'^m|_v\prod_{\substack{v\in {\mathcal S}\\ v~{\text{\rm infinite}}}} |\sum_{i=1}^s Q_i'(m)\alpha_i'^m|_v \\
& \ll & \prod_{\substack{v\in {\mathcal S}\\ v~{\text{\rm infinite}}}} |Q_i'(m)|_v \left(\prod_{\substack{v\in {\mathcal S}\\ v~{\text{\rm finite}}}} |\alpha_1'^m|_v \right) |\alpha_1'|^{m(1-\delta_1)}.
\end{eqnarray*}
In the above, we used that fact that $\sum_{i=1}^s Q_i'(m)\alpha_i'^m$ is an integer from ${\mathbb Z}$ so the product of its valuations over all infinite places $v\in M_{\mathbb K}$ is just the regular absolute value of this integer. Using again the product formula, $|\alpha_1'^m|$ is cancelled by the second product above, so we get that 
$$
\prod_{v\in {\mathcal S}} |l_{1,v}({\bf x})|_v \ll m^{\sigma_1-1} (|\alpha_1'^m|)^{-\delta_1}.
$$
Collecting everything together we get that the product shown in \eqref{eq:dp} is bounded as 
\begin{equation}
\label{eq:c1}
\prod_{i=1}^s \prod_{v\in {\mathcal S}} |l_{i,v}({\bf x})|_v\ll m^{\sum_{i=1}^s \sigma_i-1}( |\alpha_1'^m|)^{-\delta_1}\ll m^k |\alpha_1'^m|^{-\delta_1}.
\end{equation}
To be able to apply Theorem \ref{thm:ev}, we should compare the above upper bound on our double product with 
$$
\left(\prod_{v\in {\mathcal S}} |{\text{\rm det}}(l_{1,v},l_{2,v},\ldots,l_{s,v})|_v\right)\left(\frac{\prod_{v\in {\mathcal S}} |{\bf x}|_v}{H({\bf x})}\right)^s H({\bf x})^{-\delta_2},
$$
for some suitable $\delta_2$. Well, the first factor above is easy since all the involved determinants are equal to $1$. For the second factor above, we have that 
\begin{eqnarray*}
\frac{\prod_{v\in {\mathcal S}} |{\bf x|}_v}{H({\bf x})} & = & \prod_{v\in M_{\mathbb K}\backslash {\mathcal S}} |{\bf x}|_v^{-1}=\prod_{v\in {\mathbb K}\backslash {\mathcal S}} |Q_i'(m)\alpha_i'^m|_v^{-1}\\
& = & 
\prod_{v\in M_{\mathbb K}\backslash {\mathcal S}} (\max\{|Q_i'(m)|_v)^{-1}\ge 1. 
\end{eqnarray*}
In the above, we used the fact that $M_{\mathbb K}\backslash {\mathcal S}$ consists only of finite valuations $v$ for which $|\alpha_i'^m|_v=1$. 
Finally, for $H({\bf x})$ we use the fact that $x_i\in {\mathcal O}_{\mathbb K}$ to deduce that 
$$
H({\bf x})\le \prod_{v~{\text{\rm infinite}}} |{\bf x}|_v\le \left(\sum_{i=1}^s |Q_i(m)'^2\alpha_i'|^{2m}\right)^{1/2}\ll m^k|\alpha_1'|^m.
$$
Here, we used the fact that $\sum_{i=1}^s Q_i'(m)^2\alpha_i'^{2m}$ is an integer as the collection of numbers $\{Q_1'(m)^2\alpha_1^{2m},\ldots,Q_{s}'(m)\alpha_s^{2m}\}$ is Galois stable. In particular, we have that 
$$
\left(\prod_{v\in {\mathcal S}} |{\text{\rm det}}(l_{1,v},l_{2,v},\ldots,l_{s,v})|_v\right)\left(\frac{\prod_{v\in {\mathcal S}} |{\bf x}|_v}{H({\bf x})}\right)^s H({\bf x})^{-\delta_2}\gg m^{-k\delta_2} |\alpha_1'^m|^{-\delta_2}.
$$
So, the inequality \eqref{eq:xxx} will hold for us assuming that 
\begin{equation}
\label{eq:c2}
c_1m^k |\alpha_1'^m|^{-\delta_1}\le c_2 m^{-k\delta_2} |\alpha_1'^m|^{-\delta_2}
\end{equation}
holds, where $c_1$ and $c_2$ are the constants implied by the $\ll $ and $\gg $ symbols in \eqref{eq:c1} and \eqref{eq:c2}, respectively. We take $\delta_2:=\delta_1/2$ and the above inequalitybecomes equivalent to
$$
(c_1/c_2)m^{k(1+\delta_2)}<|\alpha_1'^m|^{\delta_1/2}.
$$
Taking logarithms, we get 
$$
k(1+\delta_2)\log m+\log(c_1/c_2)\le (\delta_1/2)(\log |\alpha_1'|) m.
$$
Since $n\le x$, then left--hand side is $O(\log x)$. Since $\delta_1=c_0\delta\gg x^{-1/3}$, $\log |\alpha_1'|>0$ and $m\gg n\gg {\sqrt{x}}$, it follows that the right--hand side above is $\gg x^{1/6}$. 
Thus, the last inequality above holds for $x>x_0$, where $x_0$ depends on $U$. We conclude that our ${\bf x}$ satisfies inequality \eqref{eq:xxx} with $\delta_2:=\delta_1/2$ and $x>x_0$.

We take a closer look at $H({\bf x})$. Since $(\alpha_1'),\ldots,(\alpha_s')$ are coprime, it follows that for every finite place $v\in M_{\mathbb K}$ there is $i\in \{1,\ldots,s\}$ such that $|\alpha_i'|_v=1$. This shows that for finite $v$, we have 
$$
|{\bf x}|_v\gg \min\{ |Q_i'(m)|_v\}\gg m^{-k}.
$$
Hence, 
\begin{equation}
\label{eq:c3}
H({\bf x})\gg m^{-rk} \prod_{v~{\text{\rm infinite}}} |Q_i(m)'\alpha_i'^m|_v \gg m^{-rk}|\alpha_1'|^m.
\end{equation}
Here, $r$ is the cardinality of ${\mathcal S}$. For our set-up the parameter $H$ can be taken to be ${\sqrt{s}}$. Since $m\gg n\gg x^{1/2}$, it follows that for large $x$ 
the inequality 
$$
c_3 m^{-rk} |\alpha_1'|^m\ge {\sqrt{s}},
$$
holds, where $c_3$ is the constant implied in \eqref{eq:c3}. Thus, for $x>x_0$, we have 
$$
H({\bf x})\ge H.
$$
Also, for us $D=1$ since $l_{i,v}({\bf x})$ have coefficients from ${\mathbb Z}$. So, by Theorem \ref{thm:ev}, there are proper subspaces $T_1,\ldots,T_{t_1}$ with 
$$
t_1\le (2^{60s^2} \delta_2^{-7s})^r \log 4 \log\log 4,
$$
such that ${\bf x}\in T_1\cup T_2\cup \cdots \cup T_{t_1}$. Each of the containments ${\bf x}\in T_j$ leads to an equation of the form
$$
\sum_{i=1}^s C_i^{(j)} Q_i'(m)\alpha_i'(m)=0,
$$
where ${\bf C}^{(j)}:=(C_1^{(j)},\ldots,C_s^{(j)})\in {\mathbb K}^s$ is not the zero vector. Each such equation signals that $m$ is in the set of zeros of a nondegenerate linearly recurrent sequence of order at most $k$ so there are at most $O_k(1)$ such values of $m$, where the constant in $O_k$ can be taken to be $\exp(\exp(\exp(3k\log k)))$  by Theorem \ref{thm:Schmidt}. So, it remains to understand the upper bound on $t_2$. But this is 
$$
(2^{60s^2+7s} c_0^{-7s} x^{7sc})^{r}\log 4\log\log 4.
$$
Taking $c:=1/(15sr)$, the above bound becomes 
$$
2^{(60s^2+7s)r} c_0^{-7s} x^{7/15} \log 4\log\log 4,
$$
and this is smaller than $x^{1/2}$ for $x>x_0$. This finishes the proof. 
\end{proof}

\section{The proof of Theorem \ref{thm:Main}}
\subsection{The case of the Euler $\phi$ function} 

Let $p(n)$ be the smallest prime factor of $n$. Let 
$$
{\mathcal A}_1(x)=\{n\le x: p(n)>x^{c_1}\}
$$
where $c_1\in (0,1/6)$ is a constant to be determined later. We show first that ${\mathcal A}_1(x)$ contains $O(x/\log x)$ positive integers $n\le x$ as $x\to\infty$. 
Indeed, putting $y:=x^{c_1}$, 
the set ${\mathcal A}_1(x)$ coincides with the $n\le x$ which are coprime to all the primes $p\le y$. The number of such is, by Lemma \ref{lem:1},  
$$
\Phi(x,y) \ll \frac{x}{\log y}\ll \frac{x}{\log x}.
$$
From now on, let $n\le x$ not in ${\mathcal A}_1(x)$. We also assume that $n\ge x^{	1/2}$, since there are only $O(x^{1/2})=o(x/\log x)$ as $x\to\infty$ positive integers failing this last inequality. 

For such $n$, the interval $[1,n]$ contains at least $n/p(n)$ numbers which are not coprime to $n$, namely all the positive integers which are multiples of $p(n)$. Thus, 
$$
\phi(n)\le n-\frac{n}{p(n)}\le n-n\delta,
$$
where $\delta:=1/x^{c_1}$. 
Let
$$
U_n=\sum_{i=1}^s P_i(n)\alpha_i^n.
$$
We assume that $|\alpha_1|\ge |\alpha_2|\ge \cdots \ge |\alpha_s|$. Assume first that $s=1$. In this case $\Psi(X)=(X-\alpha_1)^k$, so $\alpha_1$ is an integer with $|\alpha_1|\ge 2$. Thus, 
$$
U_n=P_1(n)\alpha_1^n,
$$
where $P_1(X)\in {\mathbb Q}[X]$. Let $L$ be the least common denominator of all the coefficients of $P_1(X)$. Then, for large $n$ (say larger than the maximal real root of $P_1(X)$), we have 
\begin{eqnarray*}
L\phi(|U_n|) & \ge & \phi(L|U_n|)\ge \phi(L|P_1(n)|) \phi(|\alpha_1|^n|)\gg \frac{L|P_1(n)|}{\log\log(L|P_1(n)|)} |\alpha_1|^n\\
& \gg & \frac{Ln^{k-1}}{\log\log n} |\alpha_1|^n.
\end{eqnarray*}
This gives 
\begin{equation}
\label{s=11}
\phi(|U_n|)\gg \frac{n^{k-1}}{\log\log n} |\alpha_1|^n.
\end{equation}
On the other hand 
\begin{equation}
\label{s=12}
|U_{\phi(n)}|=|P_1(\phi(n))||\alpha_1|^{\phi(n)}\ll \phi(n)^{k-1} |\alpha_1|^{n(1-\delta)}\ll n^{k-1} |\alpha_1|^{n(1-\delta)}.
\end{equation}
By \eqref{s=11} and \eqref{s=12}, it follows that if 
$$
\phi(|U_n|)\le |U_{\phi(n)}|
$$
holds, then 
$$
\frac{n^{k-1}}{\log\log n} |\alpha_1|^n\le \phi(|U_n|)\le |U_{\phi(n)}|\ll n^{k-1}|\alpha_1|^{n(1-\delta)},
$$
This is equivalent to 
$$
|\alpha_1|^{n\delta}\ll \log\log n.
$$
Taking logarithms this becomes 
$$
(n\delta) \log |\alpha_1|\le \log\log\log n+O(1).
$$
The right--hand side is $O(\log\log\log x)$. Since $|\alpha_1|\ge 2$, $n\delta\ge x^{1/2}/x^{c_1}\ge x^{1/3}$, it follows that the above inequality implies that $x$ is bounded. Thus, there are only finitely many such $n$ 
in case $s=1$.

From now on, we assume $s\ge 2$. In this case, the inequality
$$
|\alpha_1|^{n/2}\le |U_n|\ll |\alpha_1|^n n^k
$$
holds for all $n\ge n_0$. Indeed, the right--hand side is obvious and the left--hand side follows from an application of the subspace theorem similar to Lemma \ref{lem:delta} 
except that now we can take $\delta:=1/2$  fixed (not tending to zero with $x$) and we get only finitely many positive integers $n$ for which the left--hand side inequality above does not hold.  Thus, for $n\ge n_0$, we have that 
$$
\log\log |U_n|=\log n+O(1).
$$
Since $\phi(m)\gg m/\log\log m$ holds for all integers $m\ge 2$ (see Lemma \ref{lem:0}), we have that for $n>n_0$, the inequality
$$
\phi(|U_n|)\gg \frac{|U_n|}{\log\log |U_n|}=\frac{|U_n|}{\log n+O(1)}
$$
holds. Assume now that 
$$
\phi(|U_n|)\le |U_{\phi(n)}|.
$$
We then get 
$$
\frac{|U_n|}{\log n+O(1)}\ll \phi(|U_n|)\le |U_{\phi(n)}|\le |\alpha_1|^{\phi(n)} \phi(n)^k\ll |\alpha_1|^{n-n\delta} n^k.
$$
This gives 
$$
|U_n|\ll |\alpha_1|^{n-n\delta} n^k(\log n+O(1)).
$$
Let $c_4$ be the constant implied by the above inequality and $c_5$ be the constant implied by the above $O(1)$. We claim that with $\delta_1:=1/x^{2c_1}$, the inequality 
$$
c_4 |\alpha_1|^{n(1-\delta)} n^k (\log n+c_5)<|\alpha_1|^{n(1-\delta_1)}
$$
holds. Indeed, this is equivalent to 
\begin{equation}
\label{eq:t}
k\log n+\log(\log n+c_5)+\log(c_4)<n(\delta-\delta_1)\log |\alpha_1|.
\end{equation}
Since $n\in ({\sqrt{x}},x]$, the left--hand side above is $O(\log x)$. Since $\delta_1=1/x^{2c_1}$ and $\delta=1/x^{c_1}$, it follows that $\delta-\delta_1\ge 0.5\delta\ge 0.5x^{-c_1}$ for $x\ge x_0$. Since $n\ge {\sqrt{x}}$, it follows that the right--hand side above is $\gg x^{1/2-c_1}\gg x^{1/3}$ therefore indeed \eqref{eq:t} holds for all our $n$ in $({\sqrt{x}},x]$ and $x>x_0$.
Thus, we get
$$
|U_n|\le |\alpha_1|^{n(1-\delta_1)}.
$$
By Lemma \ref{lem:delta}, we can choose $c_1:=c/2$, such that the number of $n\le x$ satisfying the above inequality is $O_k({\sqrt{x}})=o(x/\log x)$ as $x\to\infty$, which finishes the argument. 

\subsection{The case of the $\sigma$ function}

Assume again that $x$ is large and $n\in ({\sqrt{x}}, x]$ is divisible by some prime $p\le x^{c_1}$, for some small constant $c_1$, since otherwise, like in the case of the Euler function, the set of such $n\le x$
is $O(x/\log x)$. Then $\sigma(n)\ge n+n\delta$, where $\delta:=1/x^{c_1}$, which gives 
$$
n\le \frac{\sigma(n)}{1+\delta}\le \sigma(n)(1-\delta_1),
$$
where $\delta_1:=\delta/2$. Assume now that 
\begin{equation}
\label{eq:sigmasigma}
|U_{\sigma(n)}|\le \sigma(|U_n|).
\end{equation}
As in the case of the $\phi$ function, we need to treat the case $s=1$ separately. In this case, $U_n=P_1(n)\alpha_1^n$, where $|\alpha_1|\ge 2$ is an integer and $P_1(X)\in {\mathbb Q}[X]$. Let again $L$ be the least common denominator of the coefficients of $P_1(X)$ and $n$ be larger than the maximal real zero of $P_1(X)$. The right--hand side above is, 
by Lemma \ref{lem:0}.
\begin{eqnarray}
\label{eq:s=11sig}
\sigma(|U_n|)\le \sigma(L|U_n|) & = & \sigma(L|P_1(n)|)|\alpha_1|^n)\le \sigma(L|P_1(n)|)\sigma(|\alpha_1|^n)\nonumber\\
& \ll  &L|P_1(n)|(\log\log(L|P_1(n)|)) |\alpha_1|^n\nonumber\\
& \ll & n^{k-1}(\log\log n) |\alpha_1|^{\sigma(n)(1-\delta_1)},
\end{eqnarray}
while
\begin{equation}
\label{eq:s=12sig}
|U_{\sigma(n)}|=|P_1(\sigma(n))|\alpha_1|^{\sigma(n)}\gg \sigma(n)^{k-1} |\alpha_1|^{\sigma(n)}\gg n^{k-1} |\alpha_1|^{\sigma(n)}.
\end{equation}
Inequality \eqref{eq:sigmasigma} together with \eqref{eq:s=11sig} and \eqref{eq:s=12sig} imply 
$$
n^{k-1}|\alpha_1|^{\sigma(n)}\ll |U_{\sigma(n)}|\le \sigma(|U_n|)\ll n^{k-1}(\log\log n) |\alpha_1|^{\sigma(n)(1-\delta_1)}.
$$
This leads to 
$$
|\alpha_1|^{\delta_1\sigma(n)}\ll \log\log n,
$$
and by the argument for the case $s=1$ and the $\phi$ function, this leads to the conclusion that $x$ (so, $n$) is bounded.

From now on, we assume that $s\ge 2$. The right--hand side is, by Lemma \ref{lem:0} and the calculation done at the case of the Euler $\phi$ function,
\begin{eqnarray*}
\sigma(|U_n|) & \ll  & |U_n|\log\log |U_n|\ll |\alpha_1|^n n^k (\log n+O(1))\\
& \le & |\alpha_1|^{\sigma(n)(1-\delta_1)} \sigma(n)^k (\log \sigma(n)+O(1)).
\end{eqnarray*}
Let $c_6$ and $c_7$ be the constants implied by the $\ll$--symbol and $O$--symbol above, respectively. By the argument done in the case of the Euler $\phi$ function, putting $\delta_2:=1/x^{2c_1}$, the inequality 
$$
 c_6|\alpha_1|^{m(1-\delta_1)} m^k (\log m+c_7))<|\alpha|^{m(1-\delta_2)}
 $$
 holds for all $m=\sigma(n)$ and $n\in ({\sqrt{x}},x]$ for $x>x_0$. Thus, putting $m:=\sigma(n)$, we get that 
 $$
 |U_m|\le |\alpha_1|^{m(1-\delta_2)}
 $$
 holds when $x\ge x_0$. Note that $m\ll x\log\log x$ by Lemma \ref{lem:0}.  By Lemma \ref{lem:delta}, we can choose $c_1=c/2$ and then the set of $m\ll x\log\log x$ satisfying the above inequality is of cardinality
 $$
 O_k({\sqrt{x\log\log x}}).
 $$
But this is only an upper bound on the number of distinct values of $\sigma(n)$ and we have to get an upper bound on the number of $n$'s themselves. Well, by Lemmas \ref{lem:2} and \ref{lem:3}, we may assume that $\Omega(n)\le 10\log\log x$ and $\tau(\sigma(n))\le \exp({\sqrt{\log x}})$ since the number of $n\le x$ for which one of the above inequalities fails if $O(x/(\log x)^2)$. Well, let us write 
$$
n=p_1^{a_1}\cdots p_\ell^{a_{\ell}},
$$
with distinct primes $p_1,\ldots,p_{\ell}$ and positive exponents $a_1,\ldots,a_{\ell}$, so 
$$
\sigma(n)=\prod_{i=1}^{\ell} \left(\frac{p_i^{a_i+1}-1}{p_i-1}\right).
$$
Given $m=\sigma(n)$, each of $(p_i^{a_i+1}-1)/(p_i-1)$ is a divisor $d_i$ of $\sigma(n)$. Additionally, given $d_i$ and also $a_i$, $p_i$ is uniquely determined. Thus, since $d_i$ can be fixed in at most $\tau(\sigma(n))$ ways and $a_i\le \Omega(n)$ can be fixed in at most $\Omega(n)$ ways, it follows that $p_i^{a_i}$ can be fixed in at most $\Omega(n)\tau(\sigma(n))$ ways. This is so for a fixed $i$, but $i\le \ell=\omega(n)\le \Omega(n)$. Thus, the number of such $n$ when both $\sigma(n)$ and $\Omega(n)$ are given is at most 
$$
\left((10\log \log x )\exp({\sqrt{\log x}})\right)^{10\log\log x}<\exp\left(20 (\log\log x){\sqrt{\log x}}\right)
$$
for $x>x_0$. Varying $\Omega(n)$ up to $10\log\log x$ as well as the number of possible values of $\sigma(n)$, we get that the number of possible $n\le x$ is 
$$
\ll_k  {\sqrt{x\log\log x}}(\log\log x)\exp\left(20 (\log\log x){\sqrt{\log x}}\right)=o(x/\log x)
$$
as $x\to\infty$, which finishes the proof of the $\sigma$ case.

\section*{Acknowledgements}

We thank an annonymous referee for constructive criticism which improved the quality of our manuscript.

\end{document}